\documentclass[11pt,reqno]{amsart}

\usepackage[T1]{fontenc}
\usepackage{microtype}
\usepackage[a4paper,margin=1.15in]{geometry}
\usepackage{amsmath,amssymb,mathtools,mathrsfs}

\usepackage{enumitem}
\setlist[enumerate,1]{
	label=(\roman*),
	ref=(\roman*)
}

\usepackage{graphicx}
\usepackage{subcaption}

\allowdisplaybreaks[2]

\theoremstyle{plain}
\newtheorem{theorem}{Theorem}[section]
\newtheorem{lemma}[theorem]{Lemma}
\newtheorem{proposition}[theorem]{Proposition}

\theoremstyle{definition}

\theoremstyle{remark}

\newcommand{\N}{\mathbb{N}}
\newcommand{\R}{\mathbb{R}}
\newcommand{\SigmaM}{\{1,\ldots,m\}^{\N}}
\newcommand{\restr}[2]{\left.#1\right|_{#2}}

\DeclareMathOperator{\diam}{diam}
\DeclareMathOperator{\dist}{dist}

\usepackage[
colorlinks=true,
linkcolor=blue,
citecolor=blue,
urlcolor=blue
]{hyperref}

\begin{document}

\title[Local Scaling and Dimension Distortion]{Local Scaling and Dimension Distortion of Generalized Cantor Functions}

\author{Yuanzhe Shi} \address{Hongshen Honors School, Chongqing University, Chongqing, 401331, P.R. China} \email{yuanzhe.shi@icloud.com\\  shiyuanzhe@stu.cqu.edu.cn}

\author{Zhantu Yang} \address{Hongshen Honors School, Chongqing University, Chongqing, 401331, P.R. China} \email{zhantuyang@foxmail.com}

\author{Jun Jason Luo}  \address{College of Mathematics and Statistics,  Chongqing University, Chongqing, 401331, P.R. China} \email{jun.luo@cqu.edu.cn}

\begin{abstract}
	Let \(\mu\) be a self-similar Cantor measure on \(\mathbb R\) associated with a probability weight vector \(\mathbf p\), let \(K=\operatorname{supp}\mu\), and let \(F\) denote the distribution function of \(\mu\). We characterize the points \(x\in K\) at which the local scaling exponent
	\[
	\lim_{\substack{y\to x, y\in K}}
	\frac{\log |F(y)-F(x)|}{\log |y-x|}
	\]
	exists and assumes a prescribed value. The characterization is formulated in terms of the convergence of the ratio between the accumulated logarithmic mass and geometric scales, together with the sublinear growth of the  endpoint runs. Unlike the classical ternary case, our approach  applies to arbitrary contraction ratios and probability weights. 
	
	As an application, we construct a subset \(M\subset K\) of full Hausdorff measure on which the local scaling exponent of \(F\) is ${h(\mathbf q,\mathbf p)}/{\chi(\mathbf q)}$ and establish the exact dimension-distortion formula
	\[
	\dim_{\mathrm H} F(A)
	=
	\frac{\chi(\mathbf q)}{h(\mathbf q,\mathbf p)}
	\dim_{\mathrm H} A
	\]
	for every \(A\subset M\), where \(\mathbf q\) is the natural probability vector, \(\chi(\mathbf q)\) is the corresponding Lyapunov exponent, and \(h(\mathbf q,\mathbf p)\) is the cross-entropy of \(\mathbf q\) relative to \(\mathbf p\). A three-branch example illustrates the results.
\end{abstract}

\subjclass[2020]{Primary 28A80; Secondary 26A30, 28A78.} 

\keywords{generalized Cantor function, self-similar measure, local scaling exponent, Hausdorff dimension, dimension distortion}

\thanks{The research was supported  by the Hongshen Future Scholars Program of Chongqing University.}
		
\maketitle

\section{Introduction}\label{sec:introduction}

The Cantor function, also known as the devil's staircase,  is a canonical example of a continuous singular
function. It is non-decreasing, constant on every complementary interval of
the ternary Cantor set, and has derivative zero at Lebesgue-almost every
point, yet it maps $[0,1]$ onto itself. From a measure-theoretic perspective,
it is the distribution function of the uniform self-similar  measure on the
ternary Cantor set. More generally, if $\mu$ is a non-atomic singular
probability measure on $\mathbb R$ and $F(x)=\mu((-\infty,x])$,
then
$$
F(y)-F(x)=\mu((x,y]) \qquad (x<y).
$$
The local regularity of $F$ is therefore governed by the concentration of
$\mu$ near the point under consideration. This elementary observation links
singular distribution functions with local dimensions of measures and makes
Cantor functions a natural testing ground for interactions among real
analysis, fractal geometry, and symbolic dynamics; see
\cite{Gilman1932,DovgosheyEtAl2006} for classical background.

The usual derivative is often too rigid to describe the behavior of $F$ on
the support of $\mu$, where all of its increase occurs. A more
intrinsic quantity is the logarithmic local scaling exponent
$$
\lim_{\substack{y\to x, y\in K}}
\frac{\log |F(y)-F(x)|}{\log |y-x|},
$$
where $K=\operatorname{supp}\mu$. When this limit exists, it describes the pointwise power-law relation between mass and distance at $x$. It also has a direct geometric consequence. By the general distortion principle developed in \cite{DMRV2006}, a homeomorphism with a constant logarithmic local exponent on a set scales the Hausdorff dimension of every subset by the reciprocal of that exponent. Thus, a pointwise scaling problem leads naturally to an exact dimension-distortion problem.

The differentiability and local regularity of Cantor functions have been
studied extensively. Eidswick \cite{Eidswick1974} characterized the non-differentiability
of the classical Cantor function through the spacing of the digits
\(0\) and \(2\) in ternary expansions, while Dence
\cite{Dence1979} used a related ternary-like coding to study the
one-sided differentiability of generalized Cantor functions.  Darst
\cite{Darst1993,Darst1995} determined the Hausdorff dimension of the
non-differentiability set of the classical Cantor function, establishing
the well-known dimension-squaring formula. For non-symmetric two-branch
constructions, Morris \cite{Morris2002} showed that the Hausdorff dimension is governed by
implicit equations rather than a universal squaring law. Further
developments for ordered self-similar and Moran constructions were
obtained in
\cite{LiXiaoDekking2002,DekkingLi2003,Li2007,YaoZhangLi2009,BaekLi2014}.
In particular, Dekking and Li \cite{DekkingLi2003} recovered the
dimension-squaring law for natural self-similar measures with finitely many branches, while Li \cite{Li2007} considered the case in which each probability weight exceeds the corresponding contraction ratio. Mixed 
weight regimes were subsequently studied by Yao, Zhang and Li
\cite{YaoZhangLi2009} in the binary case and by Baek and Li
\cite{BaekLi2014} for finite-branch constructions, using digit-frequency and
multifractal methods.

One-sided multifractal analysis places these results in a broader
framework. Falconer \cite{Falconer2004} showed that one-sided local
dimensions can detect behavior invisible to their two-sided counterparts
and recovered the dimension-squaring phenomenon for Ahlfors-regular
measures in a general setting. This viewpoint is particularly natural for
distribution functions, since their one-sided increments are precisely
the masses of one-sided intervals. Thermodynamic approaches were developed
by Kesseb\"ohmer and Stratmann \cite{KessebohmerStratmann2009} for
distribution functions of Gibbs measures and by Troscheit
\cite{Troscheit2014} for self-conformal devil's staircases. Recently,
Feng and Miao \cite{FengMiao2025} determined the Hausdorff, packing, and
box dimensions of H\"older non-differentiability sets for self-similar
measures under the open set condition. 

Earlier studies have focused primarily on the non-differentiability sets
of Cantor functions, and many of their results require specific relations
between the probability weights and contraction ratios. In this paper,
we investigate the existence and possible values of pointwise local
scaling exponents for generalized Cantor functions arising as the
distribution functions of self-similar measures with general probability
weights.

To describe our setting, consider an iterated function system (IFS) of contracting similarities   \(\{S_i\}_{i=1}^m\) on $\R$, given by
\[
S_i(x)=r_i x+t_i,\qquad i=1,\ldots,m,
\]
where \(m\ge 2,  \  0<r_i<1\), and \(0=t_1<t_2<\cdots<t_m<r_m+t_m=1\). Throughout, we assume that the intervals \(S_i([0,1])\) are pairwise disjoint. By Hutchinson's theorem \cite{Hutchinson1981}, there exists a unique compact self-similar set \(K\subset[0,1]\) satisfying $K=\bigcup_{i=1}^m S_i(K)$.  In particular,   $\min K=0$ and $\max K=1$. Moreover, for every positive probability vector \(\mathbf p=(p_1,\ldots,p_m)\), there exists a unique self-similar probability measure \(\mu\) satisfying
\begin{equation}\label{eq-selfsimilarmeasure}
\mu=\sum_{i=1}^m p_i\mu\circ S_i^{-1},
\end{equation}
whose support is $K$. We denote by \(F\) the distribution function of \(\mu\), commonly referred to as a generalized Cantor function. That is, $F(x):=\mu([0,x])$.

 For a coding $\omega=(\omega_k)_{k\ge1}\in \SigmaM$ of $x\in K$ and $n\ge 1$, define
$$
A_n(\omega)=-\sum_{k=1}^n\log p_{\omega_k},
\qquad
B_n(\omega)=-\sum_{k=1}^n\log r_{\omega_k}.
$$

We establish a necessary and sufficient symbolic
criterion for the logarithmic local scaling exponent of \(F\) at
\(x\) to exist and equal a prescribed value \(\alpha>0\):
the bulk quotient \(A_n(\omega)/B_n(\omega)\) must converge to
\(\alpha\), while both the left and right endpoint runs must grow sublinearly. The former
describes  the asymptotic balance between mass and geometric scales, whereas the latter
controls the additional endpoint corrections arising from neighboring
cylinders. Neither condition alone is sufficient. This 
separation of the bulk and endpoint effects is a central feature of our
approach. To the best of our knowledge, this is the first criterion that simultaneously captures
both mechanisms for generalized Cantor functions with arbitrary
contraction ratios and probability weights.

Let $s$ be the similarity dimension of
$K$, and let $\mathbf q$ be the natural geometric probability vector. Set
$$
\chi(\mathbf q)=-\sum_{i=1}^m q_i\log r_i,
\qquad
h(\mathbf q,\mathbf p)=-\sum_{i=1}^m q_i\log p_i.
$$

As an application of the criterion, we next  prove that the logarithmic local scaling exponent equals
$h(\mathbf q,\mathbf p)/\chi(\mathbf q)$ at
$\mathcal H^s$-almost every point of $K$. More precisely, there exists
$M\subset K$ with $\mathcal H^s(K\setminus M)=0$ such that
$$
\dim_{\mathrm H}F(A)
=\frac{\chi(\mathbf q)}{h(\mathbf q,\mathbf p)}
\dim_{\mathrm H}A
\qquad\text{for every }A\subset M.
$$
This yields an exact dimension-distortion formula on a full-measure
subset for generalized Cantor functions. In particular, for the standard middle-third Cantor system with
\(\mathbf p=\mathbf q=(1/2,1/2)\), our result recovers, as a special
case, the corresponding result for the classical Cantor function
obtained in \cite{DMRV2006}.

The paper is organized as follows. Section~\ref{sec:preliminaries} introduces
the notation and states the main results. Section~\ref{sec:local-proof} proves
the pointwise criterion, Section~\ref{sec:dimension-proof} establishes the
almost-everywhere limits and the dimension-distortion formula, and
Section~\ref{sec:example} presents a non-uniform three-branch example illustrating the results and the necessity of the sublinear endpoint-run condition.

\section{Main results}\label{sec:preliminaries}

Let  $K$ be the self-similar set induced by the  IFS $\{S_i\}_{i=1}^m$  as in the introduction.   Let $\Sigma=\SigmaM$ be the symbolic space of the IFS, and let $\sigma$ denote the left shift in the sense that $\sigma(\omega_1,\omega_2,\ldots)=(\omega_2,\omega_3,\ldots)$. There is a one-to-one coding map  $\pi:\Sigma\to K$  given by $$\pi(\omega):=\lim_{n\to\infty}S_{\omega_1}\circ\cdots\circ S_{\omega_n}(0),
  \quad \omega=(\omega_1,\omega_2,\ldots)\in\Sigma.$$
  
Let $\mathbf p=(p_1,\ldots,p_m)$ be a probability vector such that
$p_i>0$ for all $i$ and $\sum_{i=1}^{m}p_i=1$. Let $\nu_{\mathbf p}:=\mathbf p^{\mathbb N}$ denote the corresponding
Bernoulli probability measure on $\Sigma$. Then the self-similar measure $\mu$ defined by \eqref{eq-selfsimilarmeasure} is the pushforward
$\mu=\pi_*\nu_{\mathbf p}$ under the coding map $\pi$.
 
Let $s>0$ be the similarity dimension of the IFS, uniquely determined by
\begin{equation}\label{eq:similarity-dimension}
	\sum_{i=1}^{m} r_i^s=1,
\end{equation}
and set $q_i=r_i^s,\ i=1,\ldots,m$. Then $\mathbf q=(q_1,\ldots,q_m)$ is the natural probability vector
associated with the IFS. We adopt the convention
\begin{equation}\label{eq:cross-entropy-decomposition}
	h(\mathbf q,\mathbf p)
	:=
	-\sum_{i=1}^{m} q_i\log p_i
\end{equation}
for the cross-entropy of $\mathbf q$ relative to $\mathbf p$.	

The Shannon entropy of $\mathbf p$ is $h(\mathbf p):=-\sum_{i=1}^{m}p_i\log p_i$,  while the Kullback--Leibler divergence of $\mathbf q$ from $\mathbf p$ is
defined by
$$
D_{\mathrm{KL}}(\mathbf q\Vert\mathbf p)
:=
\sum_{i=1}^{m}q_i\log\frac{q_i}{p_i}.
$$
Since all coordinates of $\mathbf p$ and $\mathbf q$ are positive, these
quantities are finite; the entropy and cross-entropy are positive, whereas
the Kullback--Leibler divergence is nonnegative. It follows that $h(\mathbf q,\mathbf p)=h(\mathbf q)+D_{\mathrm{KL}}(\mathbf q\Vert\mathbf p)\ge h(\mathbf q)$, with equality if and only if $\mathbf p=\mathbf q$.

Let $\chi(\mathbf q)$ denote the Lyapunov exponent of $\mathbf q$, namely,
\begin{equation}\label{eq:entropy-lyapunov}
	\chi(\mathbf q):=-\sum_{i=1}^{m}q_i\log r_i.
\end{equation}
Then $h(\mathbf q)=s\chi(\mathbf q)$, and hence $h(\mathbf q,\mathbf p) = s\chi(\mathbf q) 	+ D_{\mathrm{KL}}(\mathbf q\Vert\mathbf p)$.

For $\omega=(\omega_1, \omega_2,\ldots)\in \Sigma, n\ge 0$,  we define the lengths of the left and right endpoint runs beginning at the $(n+1)$st position by
$$ L_n^-(\omega):=
  \begin{cases}
    \max\{\ell\ge1:\omega_{n+j}=1,  \   j=1,\ldots,\ell\},
      &\text{if this maximum is finite},\\
    0,&\text{otherwise},
  \end{cases}
$$ and
$$L_n^+(\omega) :=
  \begin{cases}
    \max\{\ell\ge1:\omega_{n+j}=m,  \  j=1,\ldots,\ell\},
      &\text{if this maximum is finite},\\
    0,&\text{otherwise}.
  \end{cases}
$$
Here the maximum of the empty set is understood to be $0$; the second case also covers a tail that is identically equal to the relevant endpoint symbol.
Write
$$ R_n(\omega):=  \max\{L_n^-(\omega),L_n^+(\omega)\}.$$

Note that  
$$
A_n(\omega):=-\sum_{k=1}^{n}\log p_{\omega_k},
\qquad
B_n(\omega):=-\sum_{k=1}^{n}\log r_{\omega_k}.
$$

Set $p_{\min}:=\min_{1\le i\le m}p_i, \ r_{\min}:=\min_{1\le i\le m}r_i, \ r_{\max}:=\max_{1\le i\le m}r_i$. Trivially, $p_{\min}\in(0,1)$, and for any $n\ge 1$, 
\begin{equation}\label{eq:Blinear}
	-\log r_{\max}\le \frac{B_n(\omega)}{n}\le -\log r_{\min}.
\end{equation}

Our first main result is the following pointwise scaling criterion.

\begin{theorem}\label{thm:local-scaling}
Let $x=\pi(\omega)\in K$ and let $\alpha>0$. Then
\begin{equation}\label{eq:main-limit1}
  \lim_{\substack{y\to x, y\in K}}
  \frac{\log|F(y)-F(x)|}{\log|y-x|}
  =\alpha
\end{equation}
holds if and only if
\begin{equation}\label{eq:conditions1}
  \lim_{n\to\infty}\frac{A_n(\omega)}{B_n(\omega)}=\alpha,
  \qquad
  \lim_{n\to\infty}\frac{R_n(\omega)}{n}=0.
\end{equation}
\end{theorem}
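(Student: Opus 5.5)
Throughout we work with cylinders: for a word $u=\omega_1\cdots\omega_n$ let $I_u=S_{\omega_1}\circ\cdots\circ S_{\omega_n}([0,1])$. Then $|I_u|=e^{-B_n(\omega)}$ and $\mu(I_u)=e^{-A_n(\omega)}$. Since the first-level intervals are pairwise disjoint, there is a gap constant $g>0$: any point of $K\setminus I_u$ lies at distance at least $g\,|I_{\omega_1\cdots\omega_{n-1}}|$ from $I_u$, or at least $g|I_u|$ from $x\in I_u$, after suitable bookkeeping. The plan is to express, for $y\in K$ with $y>x$, both $y-x$ and $F(y)-F(x)=\mu((x,y])$ in terms of the level $n$ at which the codings of $x$ and $y$ split. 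The case $y<x$ is symmetric, with $L^-$ and $r_1,p_1$ in place of $L^+$ and $r_m,p_m$.

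\emph{Two-sided estimate.} Let $n$ be the length of the common prefix of $\omega$ and the coding $\tau$ of $y$, so $\tau_{n+1}>\omega_{n+1}$. The point $x$ lies in $I_{\omega|_{n+1}}$. Just to the right of $x$ inside that cylinder there is a run of $m$'s of length $\ell=L^+_{n+1}(\omega)$; if the tail is all $m$'s, $x$ is a right endpoint. We get the upper bound
\[
y-x\le |I_{\omega|n}|=e^{-B_n}.
\]
For the lower bound, $y-x$ is at least the gap between $x$ and the right end of $I_{\omega|_{n+1}}$, which is comparable to $e^{-B_{n+1+\ell}}$. Similarly, $\mu((x,y])\le e^{-A_n}$, and $\mu((x,y])$ is bounded below by roughly $\mu$ of a cylinder of generation about $n+1+\ell+j$, where $j$ comes from how deep $y$ sits inside its own cylinder. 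To control $y$ as well, I will split $y$'s position by its own run $L^-$ after level $n+1$. The cleaner route is to restrict to the scales $y-x\asymp e^{-B_k}$ and show that $\mu((x,y])$ is between $e^{-A_{k+O(R)}}$ and $e^{-A_{k-O(1)}}$. Using $A_{k+j}-A_k\le j\log(1/p_{\min})$ and~\eqref{eq:Blinear}, we get
\[
\frac{\log\mu((x,y])}{\log(y-x)}=\frac{A_k+O(R_k+1)}{B_k+O(R_k+1)}.
\]
Here $R_k$ are the relevant endpoint runs at levels near $k$.

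\emph{Sufficiency.} Assume~\eqref{eq:conditions1}. Since $B_k\ge k\log(1/r_{\max})$ and $R_k=o(k)$, the displayed ratio tends to $\alpha$ as $k\to\infty$, that is, as $y\to x$. The perturbation of $y$'s own run only enlarges $(x,y]$ within a window of comparable scale. In the bad case, $y$ is deep at the left end of a neighbouring cylinder. Then I compare with the nearest point $y'\in K$ at that cylinder's left endpoint: $\mu((x,y])\ge\mu((x,y'])$ and $y-x\le 2(y'-x)$ up to constants. This reduces to $y$ being a cylinder endpoint.

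\emph{Necessity.} For the bulk condition, take $y_n$ to be the endpoint of $I_{\omega|n}$ on the side away from $x$. Then $|y_n-x|\ge c\,e^{-B_{n+R_n}}$ and $\le e^{-B_n}$, while $\mu$ is sandwiched similarly. Take subsequences where $R_n$ is small, or else argue directly with points $y$ in a sibling cylinder at distance $\asymp e^{-B_n}$ and mass $\asymp e^{-A_n}$. Such a sibling exists at every level $n$ on at least one side: if $\omega_{n+1}\neq m$, use the right sibling; otherwise use the left one. This gives $A_n/B_n\to\alpha$. For the run condition, suppose $L^+_n(\omega)\ge\varepsilon n$ along a subsequence. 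Choose $y$ just right of $I_{\omega|_{n+\ell}}$ in the next sibling at level $n$. Then $y-x\approx e^{-B_n}\cdot(\text{const})$, since the gap is at level $n$. The mass, however, is only about $e^{-A_{n+\ell}}$ (if the sibling cylinder is taken at depth $n+\ell$), or the ratio is distorted by the factor $\log p_m/\log r_m$ over a linear-length block. Concretely, I pick two test points: one at distance $\asymp e^{-B_{n+\ell}}$ inside $\omega$'s right-run region, with mass ratio $\approx (A_n+\ell a)/(B_n+\ell b)$ where $a=-\log p_m$ and $b=-\log r_m$; and one at distance $\asymp e^{-B_n}$ with mass $\asymp e^{-A_n}$ from the other side. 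The first ratio must tend to $\alpha$, and the second forces $A_n/B_n\to\alpha$, which leaves $a/b=\alpha$. So a further mismatched pair is needed: distance $e^{-B_n}$ (gap at level $n$) but mass $e^{-A_{n+\ell}}$ coming only from the thin right tail of $x$ within $I_{\omega|_{n+\ell}}$. The resulting ratio $(A_n+\ell a)/B_n$ stays bounded away from $\alpha$ when $\ell\ge\varepsilon n$.

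\emph{Main obstacle.} The main obstacle is this last step: producing, in general, a $y\in K$ whose distance is governed by a coarse level and whose mass by a fine level. It relies on the structure that $x$ sits at the far end of a long run, so the right part of $I_{\omega|n}$ beyond $x$ is a gap of size comparable to $e^{-B_n}$ but carries tiny mass. Making the constants uniform in the gap sizes is the delicate part.
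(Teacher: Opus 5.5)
\textbf{Verdict: the outline is the paper's, but the proof is incomplete at the step you flag, and your description of that step is geometrically wrong.}

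Your outline matches the paper's proof. You work at the level where the codings of $x$ and $y$ split, use the far endpoint of a sibling cylinder to force $A_n/B_n\to\alpha$, and use a point with coarse distance but fine mass to force $R_n/n\to0$. Your remark that fine-scale and coarse-scale test points alone only force $\log p_m/\log r_m=\alpha$ correctly explains why such a mismatched point is needed.

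The gap is in the step you call the main obstacle. If $\omega_{n+1}=\cdots=\omega_{n+\ell}=m$, then $x\in I_{\omega|_n m^\ell}$, which shares its right endpoint with $I_{\omega|_n}$. So the part of $I_{\omega|_n}$ to the right of $x$ has length at most $e^{-B_{n+\ell}}$; it is not a gap of size $\asymp e^{-B_n}$. The large gap lies outside $I_{\omega|_n}$, between it and its right neighbour $I_{\omega|_{n-1}(\omega_n+1)}$, and that neighbour exists only when $\omega_n\neq m$. Your first choice of $y$ (just right of $I_{\omega|_{n+\ell}}$, in the next level-$n$ sibling) is therefore correct. But it works only after you move $n$ back to the start of the run. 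This keeps $\ell\ge\varepsilon n$, and the new indices still tend to infinity because each run is finite.

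Once this is done, nothing is delicate. Your gap constant already gives uniformity: if the codings of $x$ and $y$ first differ at position $k$, then $|x-y|\ge\delta\,r_{\omega|_{k-1}}$ with $\delta=\min_{i<j}\dist(S_i(K),S_j(K))>0$. This holds whatever the runs are and wherever $y$ sits in its cylinder.

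For the mass, use the recursion $T(\xi)=c_{\xi_1}+p_{\xi_1}T(\sigma\xi)$, where $T=F\circ\pi$ and $c_i=\sum_{j<i}p_j$. It gives $1-T(\xi)=p_m(1-T(\sigma\xi))$ whenever $\xi_1=m$. Take $y$ to be the left endpoint of $I_{\omega|_{n-1}(\omega_n+1)}$. Then $\mu((x,y])=p_{\omega|_n}(1-T(\sigma^n\omega))=p_{\omega|_n}p_m^{\ell}(1-T(\sigma^{n+\ell}\omega))\le e^{-A_n}p_m^{\ell}$. Hence $Q(x,y)\ge(A_n+\ell\log(1/p_m))/(B_{n-1}-\log\delta)$, and so $\liminf Q\ge\alpha+\varepsilon\log p_m/\log r_{\min}>\alpha$. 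This is exactly the paper's contradiction.

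The same two facts tidy your sufficiency sketch. With common prefix of length $n$, the correct distance bound is $y-x\ge\delta e^{-B_n}$, not $e^{-B_{n+1+\ell}}$. Neither the depth $j$ of $y$ nor the bad-case comparison is needed. The interval $(x,y]$ already contains the part of $I_{\omega|_{n+1}}$ to the right of $x$. Its mass is $p_{\omega|_{n+1}}(1-T(\sigma^{n+1}\omega))\ge p_{\omega|_{n+1}}p_m^{L^+_{n+1}(\omega)+1}$, which follows by iterating the recursion to the end of the run and using $T((m-1)\underline{m})=1-p_m$.
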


The two limits in \eqref{eq:conditions1} serve distinct purposes. The
first describes the asymptotic ratio between the logarithmic mass and
geometric scales, whereas the second requires the endpoint-run lengths
to grow sublinearly. This condition rules out linearly long runs along
the extreme branches, which may produce additional endpoint corrections
in the numerator of \eqref{eq:main-limit1}. The example in
Section~\ref{sec:example} shows that the first limit alone is insufficient
and illustrates why the endpoint-run condition is necessary.

Let $K^1$ be the set of endpoints of the complementary intervals of $K$ in $\R$. Note that $\pi(\omega)\in K^1$ if and only if $\omega$ is eventually identically $1$ or eventually identically $m$.  Write $K^*:=K\setminus K^1$. 

The second main result is the   exact formula for Hausdorff dimension distortion under $F$.

\begin{theorem}\label{thm:dimension-distortion}
Let $s$, $\chi(\mathbf q)$, and $h(\mathbf q,\mathbf p)$ be defined by
\eqref{eq:similarity-dimension}--\eqref{eq:entropy-lyapunov}.
Then there exists a set $M\subset K^*$ such that
\begin{equation}\label{eq:full-Hs-set}
  \mathcal H^s(K\setminus M)=0,
\end{equation}
and, for every $A\subset M$,
\begin{equation}\label{eq:generalized-theorem-15}
  \dim_{\mathrm H}F(A)
  =\frac{\chi(\mathbf q)}{h(\mathbf q,\mathbf p)}\dim_{\mathrm H}A.
\end{equation}
\end{theorem}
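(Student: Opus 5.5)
Let $\nu_{\mathbf q}=\mathbf q^{\N}$ be the Bernoulli measure on $\Sigma$. Under the disjointness (strong separation) assumption, $\mathcal H^s|_K$ is a positive finite multiple of $\pi_*\nu_{\mathbf q}$, since both are self-similar with weights $r_i^s$. Hence "full $\mathcal H^s$-measure" is the same as "full $\pi_*\nu_{\mathbf q}$-measure". The plan is to define
\[
M=\pi(G)\cap K^*,\qquad G=\Bigl\{\omega:\ \tfrac{A_n(\omega)}{B_n(\omega)}\to \alpha_0:=\tfrac{h(\mathbf q,\mathbf p)}{\chi(\mathbf q)},\ \tfrac{R_n(\omega)}{n}\to0\Bigr\},
\]
show $\nu_{\mathbf q}(G)=1$, and then invoke Theorem~\ref{thm:local-scaling} together with the distortion principle of \cite{DMRV2006}.

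\textbf{Full measure.} By the strong law of large numbers for the i.i.d.\ variables $-\log p_{\omega_k}$ and $-\log r_{\omega_k}$ under $\nu_{\mathbf q}$, we get $A_n/n\to h(\mathbf q,\mathbf p)$ and $B_n/n\to\chi(\mathbf q)>0$ almost surely. Hence $A_n/B_n\to\alpha_0$ a.s.

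For the run condition, fix $\varepsilon>0$. Then
\[
\nu_{\mathbf q}\bigl(L_n^-\ge \varepsilon n\bigr)\le q_1^{\lceil\varepsilon n\rceil},\qquad \nu_{\mathbf q}\bigl(L_n^+\ge \varepsilon n\bigr)\le q_m^{\lceil\varepsilon n\rceil}.
\]
Both bounds are summable in $n$ because $q_1,q_m<1$ (as $m\ge2$). By Borel--Cantelli, a.s.\ $R_n<\varepsilon n$ eventually. The convention $L_n^\pm=0$ concerns only tails that are eventually constant, and these form a $\nu_{\mathbf q}$-null set. This null set, namely $\pi^{-1}(K^1)$, is exactly what we remove to land inside $K^*$; it is countable, so $\mathcal H^s(K^1)=0$. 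Taking $\varepsilon=1/j$ over a countable family gives $\nu_{\mathbf q}(G)=1$, and therefore $\mathcal H^s(K\setminus M)=0$.

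\textbf{Pointwise exponent.} By Theorem~\ref{thm:local-scaling}, every $x\in M$ satisfies
\[
\lim_{y\to x,\,y\in K}\frac{\log|F(y)-F(x)|}{\log|y-x|}=\alpha_0 .
\]

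\textbf{Dimension distortion.} This is the step I expect to be the main obstacle. It is not quite immediate, because the limit holds only for $y\in K$, whereas $F$ is defined on all of $\R$, and $F$ is not injective on $\R$. The cleanest route is to prove both inequalities directly for $A\subset M$, with $\beta:=1/\alpha_0$.

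For the upper bound $\dim_{\mathrm H}F(A)\le \beta\dim_{\mathrm H}A$: fix $\delta>0$ and decompose
\[
A=\bigcup_k A_k,\qquad A_k=\bigl\{x\in A:\ |F(y)-F(x)|\le|y-x|^{\alpha_0-\delta}\ \text{for all } y\in K \text{ with } |y-x|<1/k\bigr\}.
\]
Let $U$ be a set in a cover of $A_k$ with $\diam U<1/k$ and $U\cap A_k\ni x$. Then $F(U\cap K)$ lies in an interval of length at most $2(\diam U)^{\alpha_0-\delta}$, since any two points of $U\cap K$ are within $\diam U$ of $x$. So $\dim_{\mathrm H} F(A_k)\le \dim_{\mathrm H}A_k/(\alpha_0-\delta)$. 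Countable stability over $k$, followed by $\delta\to0$, gives the upper bound.

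For the lower bound, apply the same argument to $F^{-1}$ on $F(K)=[0,1]$. The restriction of $F$ to $K^*$ is injective: $F$ is constant only on the closures of gaps, which meet $K$ only in $K^1$. So $A=F^{-1}(F(A))\cap K^*$. Decompose $A$ into sets $A'_k$ on which $|F(y)-F(x)|\ge|y-x|^{\alpha_0+\delta}$ for $y\in K$, $|y-x|<1/k$. Inverse images under $F|_K$ of small intervals meeting $F(A'_k)$ then have diameter at most $C(\diam)^{1/(\alpha_0+\delta)}$. One must check that the preimage of an interval of small length is localized near a point of $A'_k$; continuity and monotonicity of $F$ give this, since the preimage of an interval meeting $F(A'_k)$ at $F(x)$ is an interval containing $x$ whose endpoints in $K$ obey the lower estimate. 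This yields $\dim_{\mathrm H}A\le(\alpha_0+\delta)\dim_{\mathrm H}F(A)$. Letting $\delta\to0$ and combining the two inequalities gives \eqref{eq:generalized-theorem-15}.
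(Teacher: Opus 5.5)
Your full-measure argument, the choice $M=\pi(G)\cap K^*$, and the upper bound $\dim_{\mathrm H}F(A)\le \dim_{\mathrm H}A/\alpha_0$ are correct. Proving both inequalities by hand is a legitimate alternative to what the paper does, which is to check that $\restr{F}{M}$ is a homeomorphism onto $F(M)$ and then quote Lemma~\ref{lem:constant-exponent-distortion}. The gap is the localization step in the lower bound.

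The estimate defining $A'_k$ only controls points $y$ with $|y-x|<1/k$. Yet you assert that preimages of short intervals $V$ meeting $F(A'_k)$ have diameter at most $C(\diam V)^{1/(\alpha_0+\delta)}$. This is false, because $F$ is constant on every gap of $K$. Suppose $(e,e')$ is a gap of length at least $1/k$ and $x\in A'_k$ lies just to the left of $e$. Then $V=[F(x),F(e')]$ can be arbitrarily short, while $F^{-1}(V)\cap K$ contains both $x$ and $e'$. Since $e'-x>1/k$, the lower estimate at $x$ says nothing about this pair.

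Intersecting with $A'_k$ does not rescue the claim, because $A'_k$ can contain points on both sides of such a gap with almost equal $F$-values. Take the classical Cantor function ($m=2$, $r_i=1/3$, $p_i=1/2$) and $A=M$. Let $x_N=\pi(u\,1\,2^N\tau)$ and $y_N=\pi(u\,2\,1^N\tau)$, where $i^N$ denotes $N$ copies of the symbol $i$ and $\tau$ is typical. Fix a sufficiently long word $u$ (depending on $\tau$ and $\delta$) and $k=3^{|u|+1}$. Then all $x_N,y_N$ lie in $A'_k$, but $F(y_N)-F(x_N)=2^{-|u|-N-1}\to 0$ while $|y_N-x_N|\ge 3^{-|u|-1}$.

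The repair is cheap. Cut each $A'_k$ into the countably many pieces $A'_{k,j}=A'_k\cap[j/(2k),(j+1)/(2k))$. Any two points of one piece are within $1/k$ of each other, so $|x-y|\le|F(x)-F(y)|^{1/(\alpha_0+\delta)}$ for all $x,y\in A'_{k,j}$. Hence $\diam\bigl(A'_{k,j}\cap F^{-1}(V)\bigr)\le(\diam V)^{1/(\alpha_0+\delta)}$ for every $V$. This gives $\dim_{\mathrm H}A'_{k,j}\le(\alpha_0+\delta)\dim_{\mathrm H}F(A'_{k,j})$, and countable stability yields $\dim_{\mathrm H}A\le(\alpha_0+\delta)\dim_{\mathrm H}F(A)$.

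Alternatively, localize in the target instead of the source. For $x\in K^*$, $F(y)\to F(x)$ with $y\in K$ forces $y\to x$. This is precisely the continuity of $(\restr{F}{M})^{-1}$ that the paper verifies before applying Lemma~\ref{lem:constant-exponent-distortion}.

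Once repaired, your argument is a self-contained proof of that lemma in the present setting. It replaces the black-box lemma and the continuity of the inverse with an explicit covering argument. The paper's route, by contrast, gets the localization automatically from the homeomorphism property.
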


The distortion factor has a transparent information-theoretic
interpretation. As $D_{\mathrm{KL}}(\mathbf q\Vert\mathbf p)$   increases,  the cross-entropy $h(\mathbf q,\mathbf p)$ increases. Consequently, the dimension-distortion factor ${\chi(\mathbf q)}/{h(\mathbf q,\mathbf p)}$ decreases. 	In particular, ${\chi(\mathbf q)}/{h(\mathbf q,\mathbf p)}
\le 1/s$, 	with equality if and only if $\mathbf p=\mathbf q$.

Theorem~\ref{thm:local-scaling} provides a pointwise criterion for any
prescribed local scaling exponent $\alpha>0$. For $\mathbf q$-typical points, Proposition \ref{prop:typical-exponent} shows that this
exponent is $h(\mathbf q,\mathbf p)/\chi(\mathbf q)$. By contrast, the dimension distortion in Theorem \ref{thm:dimension-distortion} is governed by the reciprocal of that exponent. Thus, the two theorems describe pointwise scaling and global dimension distortion through a pair of reciprocals.

\section{Pointwise scaling criterion}\label{sec:local-proof}

For $\omega=(\omega_1,\omega_2,\ldots)\in\Sigma$ and $n\ge1$, write $\omega|_n:=\omega_1\cdots\omega_n$, and let $\omega|_0$ denote the empty word. For a finite word $u=u_1\cdots u_n$, write
$$
S_u:=S_{u_1}\circ\cdots\circ S_{u_n},
\qquad
r_u:=\prod_{k=1}^{n}r_{u_k},
\qquad
p_u:=\prod_{k=1}^{n}p_{u_k}.
$$
For the empty word, the corresponding products are understood to be equal to $1$. For convenience, we denote by $\underline{i}:=iii\cdots\in \Sigma$   the coding consisting entirely of the symbol $i$, and let
$$
T:=F\circ\pi,
\qquad
c_i:=\sum_{j<i}p_j.
$$

The following two lemmas play a key role in the proof of Theorem \ref{thm:local-scaling}.

\begin{lemma}\label{lem:symbolic-recursion}
	For every $\omega\in\Sigma$, one has
	\begin{equation*}\label{eq:recursion}
		T(\omega)
		=
		c_{\omega_1}
		+
		p_{\omega_1}T(\sigma\omega).
	\end{equation*}
\end{lemma}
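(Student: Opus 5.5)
The plan is to compute $T(\omega)=F(\pi(\omega))=\mu([0,\pi(\omega)])$ by splitting $[0,\pi(\omega)]$ according to the first-level cylinders. Set $x=\pi(\omega)$ and $i=\omega_1$, so that $x=S_i(\pi(\sigma\omega))\in S_i(K)$. The first-level intervals $S_j([0,1])$ are pairwise disjoint and ordered increasingly in $j$, because $0=t_1<t_2<\cdots<t_m$ and $S_j([0,1])=[t_j,t_j+r_j]$. Hence $[0,x]$ contains $S_j(K)$ for every $j<i$, is disjoint from $S_j(K)$ for every $j>i$, and meets $S_i(K)$ in $S_i(K)\cap[t_i,x]$. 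Since $\mu$ is supported on $K=\bigsqcup_j S_j(K)$, this gives
\[
\mu([0,x])=\sum_{j<i}\mu(S_j(K))+\mu\bigl(S_i(K)\cap[0,x]\bigr).
\]

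Next I use the self-similarity relation \eqref{eq-selfsimilarmeasure} together with disjointness. For each $j$, $\mu\circ S_k^{-1}(S_j(K))=\mu(S_k^{-1}(S_j(K)))$, and $S_k^{-1}(S_j(K))\cap K=\emptyset$ for $k\ne j$ because the sets $S_k(K)$ are disjoint. Therefore $\mu(S_j(K))=p_j\mu(K)=p_j$, and the first sum equals $c_i$. Alternatively, I can argue symbolically: $\mu=\pi_*\nu_{\mathbf p}$ and $\pi^{-1}(S_j(K))=[j]$, the first-level cylinder.

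For the remaining term, the same reasoning gives $\mu(E)=p_i\,\mu(S_i^{-1}E)$ for Borel $E\subset S_i(K)$. Here $E=S_i(K)\cap[0,x]$, and since $S_i$ is increasing with $S_i(0)=t_i$, we get $S_i^{-1}(E)=K\cap[0,\pi(\sigma\omega)]$. Thus $\mu(E)=p_i\,\mu([0,\pi(\sigma\omega)])=p_iT(\sigma\omega)$, which is the claimed identity.

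There is no serious obstacle. The only care needed is the bookkeeping at boundaries: confirming that the order of the first-level intervals matches the index order, and that the endpoint $x$ itself carries no atom. The latter is not actually needed, since closed intervals are used consistently on both sides; $\mu$ is also non-atomic anyway.
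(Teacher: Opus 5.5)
Your proposal is correct and follows essentially the same route as the paper: decompose $[0,\pi(\omega)]$ along the ordered, pairwise disjoint first-level cylinders, discard the $\mu$-null gaps, and use self-similarity to identify the two pieces as $c_{\omega_1}$ and $p_{\omega_1}T(\sigma\omega)$. Your version spells out two measure identities that the paper uses without comment, namely $\mu(S_j(K))=p_j$ and $\mu(E)=p_i\,\mu(S_i^{-1}E)$ for Borel $E\subset S_i(K)$, both derived from the disjointness of the sets $S_j(K)$.
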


\begin{proof}
	Set $x:=\pi(\omega)$. 	By the definition of the coding map, $\pi(\omega)	= 	S_{\omega_1}\bigl(\pi(\sigma\omega)\bigr)\in S_{\omega_1}(K)$. 	
	Since the first-level cylinder sets are pairwise disjoint and ordered from left to
	right, the part of $K$ lying to the left of $x$ consists
	of all the first-level cylinders $S_1(K),\ldots,S_{\omega_1-1}(K)$ 	together with the portion of $S_{\omega_1}(K)$ between its
	left endpoint and $x$. Because $\mu$ is supported on $K\subset [0,1]$, the
	open gaps between these cylinders have zero $\mu$-measure. 	Therefore,
	\begin{align*}
		T(\omega) &= \mu([0,\pi(\omega)])=
		\sum_{i=1}^{\omega_1-1}\mu(S_i(K))
		+
		\mu\left(
		[S_{\omega_1}(0),\pi(\omega)]
		\right)\\
		&= \sum_{i=1}^{\omega_1-1}\mu(S_i(K)) + \mu([S_{\omega_1}(0), S_{\omega_1}(\pi(\sigma\omega))])\\
		&=c_{\omega_1} + p_{\omega_1}\mu([0, \pi(\sigma\omega)]) = c_{\omega_1} + p_{\omega_1}T(\sigma\omega).
	\end{align*}
\end{proof}

\begin{lemma}\label{lem:boundary-tail}
	Let $x=\pi(\omega)\in K$. The following statements hold.
	\begin{enumerate}
		\item 	If $x$ is not a right endpoint of a complementary interval of $K$, then, for every $n\ge0$,
		\begin{equation*}\label{eq:left-tail}
			T(\sigma^n\omega)
			\ge p_1^{L_n^-(\omega)+1}.
		\end{equation*}
		\item If $x$ is not a left endpoint of a complementary interval of $K$, then, for every $n\ge0$,
		\begin{equation*}\label{eq:right-tail}
			1-T(\sigma^n\omega)
			\ge p_m^{L_n^+(\omega)+1}.
		\end{equation*}
	\end{enumerate}
\end{lemma}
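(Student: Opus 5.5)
Both parts are proved the same way, so we focus on (i); (ii) follows by the symmetric argument, replacing symbol $1$ by $m$, $c_1=0$ by $c_m=1-p_m$, and $T$ by $1-T$. The idea is to iterate Lemma~\ref{lem:symbolic-recursion} along the left endpoint run of $\sigma^n\omega$. Set $\eta:=\sigma^n\omega$ and $\ell:=L_n^-(\omega)=L_0^-(\eta)$.

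First we rule out the degenerate case. The hypothesis says $x$ is not a right endpoint of a gap. Since $\pi(\eta)$ is the leftmost point of $K$ exactly when $\eta=\underline 1$, a tail $\sigma^n\omega=\underline 1$ would make $x=S_{\omega|_n}(0)$. This point is the left endpoint of the cylinder $S_{\omega|_n}(K)$, so it is either $0$ or the right endpoint of a gap.

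The case $x=0$ needs care. Here $T(\sigma^n\omega)=T(\underline 1)=\mu(\{0\})=0$, and the inequality fails. Likewise in (ii) the point $x=1$ is problematic. I would read the hypothesis as excluding these cases: either $0$ is regarded as the right endpoint of the unbounded gap $(-\infty,0)$, which is the convention implicit in $K^1$ as "endpoints of complementary intervals of $K$ in $\R$", or the tail is required not to be $\underline 1$. Granting this convention, $\eta\ne\underline 1$. Hence $\ell$ is finite, and by definition $\eta_1=\cdots=\eta_\ell=1$ and $\eta_{\ell+1}\ne1$. When $\ell=0$ the first symbol is simply not $1$.

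Next we iterate the recursion. Lemma~\ref{lem:symbolic-recursion} with $c_1=0$ gives $T(\eta)=p_1T(\sigma\eta)$, and applying it $\ell$ times yields $T(\eta)=p_1^{\ell}T(\sigma^{\ell}\eta)$. Now $\sigma^\ell\eta$ has first symbol $j:=\eta_{\ell+1}\ge2$. A further application gives
\[
T(\sigma^\ell\eta)=c_j+p_jT(\sigma^{\ell+1}\eta)\ge c_j\ge c_2=p_1,
\]
using $T\ge0$. Therefore $T(\sigma^n\omega)\ge p_1^{\ell+1}$.

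For (ii) we write $1-T(\eta)=\mu$-mass to the right. From the recursion, $1-T(\omega)=(1-c_{\omega_1}-p_{\omega_1})+p_{\omega_1}(1-T(\sigma\omega))$, and $1-c_m-p_m=0$. Iterating over the run of $m$'s gives $p_m^{\ell}(1-T(\sigma^\ell\eta))$. With $j\le m-1$ we then get $1-T(\sigma^\ell\eta)\ge 1-c_j-p_j\ge p_m$.

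The only delicate point is the bookkeeping in the first step: the hypothesis must exclude exactly the tails $\underline 1$ (respectively $\underline m$), and the convention that $L_n^\pm=0$ for infinite runs is harmless precisely because those tails are excluded. The actual inequality is routine.
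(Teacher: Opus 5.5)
Your proposal is correct and follows the paper's proof: you iterate Lemma~\ref{lem:symbolic-recursion} along the run of $1$'s (resp.\ $m$'s) and bound the first off-run factor below by $p_1$ (resp.\ $p_m$). The one difference is cosmetic: you get that last bound from one more recursion step ($T\ge c_j\ge c_2=p_1$), where the paper uses monotonicity ($T(\sigma^{n+L}\omega)\ge T(2\underline{1})=p_1$). Your treatment of $x=0$ and $x=1$, counting them as endpoints of the unbounded complementary intervals, matches the paper's convention, which is why it can say that $\pi(\omega)\in K^1$ if and only if $\omega$ is eventually $\underline{1}$ or $\underline{m}$.
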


\begin{proof}
\emph{(i)}  Since $x$ is not a right endpoint  of a complementary interval of $K$, $L_n^-(\omega)$ is finite. By Lemma \ref{lem:symbolic-recursion},
$$
  T(\sigma^n\omega) = c_1 + p_1T(\sigma^{n+1}\omega) = p_1T(\sigma^{n+1}\omega).
$$
Iterating this identity $L_n^-(\omega)$ times and using $\omega_{n+L_n^-(\omega)+1} > 1$, we obtain
\begin{align*}
  T(\sigma^n\omega) = p_1^{L_n^-(\omega)} T(\sigma^{n+L_n^-(\omega)}\omega)\ge p_1^{L_n^-(\omega)}T(2\underline{1})= p_1^{L_n^-(\omega)}(c_2 + p_2T(\underline{1}))  =p_1^{L_n^-(\omega)+1}.
\end{align*}

\emph{(ii)}  Similarly, $L_n^+(\omega)$ is finite. By Lemma \ref{lem:symbolic-recursion},
\begin{align*}
  1-T(\sigma^n\omega) &= 1-(c_m + p_mT(\sigma^{n+1}\omega)) \\
  &= 1-(1 - p_m + p_mT(\sigma^{n+1}\omega)) \\
  &= p_m(1-T(\sigma^{n+1}\omega)).
\end{align*}
Iterating this identity $L_n^+(\omega)$ times and using $\omega_{n+L_n^+(\omega)+1} <m$, we obtain
\begin{align*}
  1-T(\sigma^n\omega) &= p_m^{L_n^+(\omega)}(1-T(\sigma^{n+L_n^+(\omega)}\omega))\\
  &\ge  p_m^{L_n^+(\omega)}(1 - T\left((m-1)\underline{m}\right)) \\
  &= p_m^{L_n^+(\omega)}(1 - (c_{m-1} + p_{m-1}T(\underline{m})) )\\
  &= p_m^{L_n^+(\omega)}(1 - (1 - p_m)) = p_m^{L_n^+(\omega) + 1}.
\end{align*}
\end{proof}

\begin{proof}[Proof of Theorem~\ref{thm:local-scaling}]
\emph{(i) Non-endpoint case.}   We   consider the case that $x$ is not an endpoint of a complementary interval of $K$, that is $x=\pi(\omega)\in K^*$.

For $y=\pi(\eta)\in K\setminus\{x\}$, let $n=\min\{k\ge1:\omega_k\ne\eta_k\}$ and $u=\omega_1\cdots\omega_{n-1}$. Then  $x=S_u\bigl(\pi(\sigma^{n-1}\omega)\bigr), \  y=S_u\bigl(\pi(\sigma^{n-1}\eta)\bigr)$, and hence, by similarity,
\begin{align*}
  |x-y| &= r_u\left|\pi(\sigma^{n-1}\omega)-\pi(\sigma^{n-1}\eta)\right|, \\
   |F(x)-F(y)| &= p_u\left|T(\sigma^{n-1}\omega)-T(\sigma^{n-1}\eta)\right|.
\end{align*}
Set
\begin{align*}
  w(x,y):=|\pi(\sigma^{n-1}\omega)-\pi(\sigma^{n-1}\eta)|,\quad  z(x,y):=|T(\sigma^{n-1}\omega)-T(\sigma^{n-1}\eta)|.
\end{align*}
Then we can write
$$
Q(x, y) := \frac{\log\left|F(x) - F(y)\right|}{\log\left|x - y\right|} =\frac{A_{n-1}(\omega)-\log z(x,y)}{B_{n-1}(\omega)-\log w(x,y)}.
$$

Since $\omega_n\ne \eta_n$ and the strong separation condition holds,
\begin{equation}\label{eq:w-bounds}
  0<\delta:=\min_{1\le i<j\le m}\dist(S_i(K),S_j(K))
  \le w(x,y)\le \diam K=1.
\end{equation}
Combining this with \eqref{eq:Blinear}, the factor
$$
  \frac{B_{n-1}(\omega)}{B_{n-1}(\omega)-\log w(x,y)} \longrightarrow 1 \quad \text{as} \  y\to x.
$$
It follows that
\begin{equation*}\label{eq:quotient}
\begin{aligned}
  \lim_{\substack{y\to x, y\in K}}
 Q(x, y)  &=\lim_{\substack{y\to x, y\in K}}
    \frac{A_{n-1}(\omega)-\log z(x,y)}
         {B_{n-1}(\omega)-\log w(x,y)}\\
  &=\lim_{\substack{y\to x, y\in K}}
    \left(\frac{A_{n-1}(\omega)}{B_{n-1}(\omega)}+\frac{-\log z(x,y)}{B_{n-1}(\omega)}\right).
\end{aligned}
\end{equation*}

  We now estimate $z(x,y)$. If $\omega_n>\eta_n$, then
\begin{align*}
  z(x,y)
  &=T(\sigma^{n-1}\omega)-T(\sigma^{n-1}\eta) \\
  &\ge T(\sigma^{n-1}\omega)-T(\eta_n\underline{m})\\
  &= c_{\omega_n} + p_{\omega_n}T(\sigma^n\omega)-c_{\eta_n + 1}\\
  &\ge p_{\omega_n}T(\sigma^n\omega)\ge p_{\min}^{L_n^-(\omega)+2} \ge p_{\min}^{R_n(\omega)+2}.
\end{align*}
Similarly, if $\omega_n<\eta_n$, we have
\begin{align*}
  z(x,y)
  &=T(\sigma^{n-1}\eta)-T(\sigma^{n-1}\omega)\\
  &\ge T(\eta_n\underline{1})-T(\sigma^{n-1}\omega)\\
  &= c_{\eta_n} - (c_{\omega_n} + p_{\omega_n}T(\sigma^n\omega))\\
  &\ge p_{\omega_n}\bigl(1-T(\sigma^n\omega)\bigr)  \ge p_{\min}^{L_n^+(\omega)+2}  \ge p_{\min}^{R_n(\omega)+2}.
\end{align*}
Together with $z(x,y)\le1$, this gives  
$$
0\le-\log z(x,y)
\le\bigl(R_n(\omega)+2\bigr)(-\log p_{\min}).
$$

Assume that \eqref{eq:conditions1} holds. Then, using \eqref{eq:Blinear}, we obtain
$$
  0\le \lim_{\substack{y\to x, y\in K}}\frac{-\log z(x,y)}{B_{n-1}(\omega)} \le \lim_{n\to \infty}\frac{(R_n(\omega)+2)(-\log p_{\min})}{(n-1)(-\log r_{\max})} = 0.
$$
This proves \eqref{eq:main-limit1}.

Conversely,  assume that \eqref{eq:main-limit1} holds.  We first prove the necessity of $A_n(\omega)/B_n(\omega)\to \alpha$.  For each $n\ge0$,  write $q = \omega_{n+1}$. We construct a new coding $\eta^{(n)}$ in the following way: if $q < m$, take
$$
    \eta^{(n)} = \omega|_{n}(q+1)\underline{m};
$$
if $q = m$, take
$$
    \eta^{(n)} = \omega|_{n}(q-1)\underline{1}.
$$
Let $y_n=\pi(\eta^{(n)})$. As $n\to \infty$,  $|x-y_n|\le r_{\omega|_{n}} \diam K\le (r_{\max})^n\to 0$, hence $y_n\to x$.

When $q<m$, by Lemma \ref{lem:symbolic-recursion},
$$
  T(\sigma^n\eta^{(n)})
  =c_{q+1}+p_{q+1},
  \quad
  T(\sigma^n\omega)=c_q+p_qT(\sigma^{n+1}\omega)\le c_{q+1},
$$
we have
$$
 T(\sigma^n\eta^{(n)}) - T(\sigma^n\omega) \ge p_{q+1}\ge p_{\min}.
$$

When $q=m$,
$$
  T(\sigma^n\omega)=c_m+p_mT(\sigma^{n+1}\omega)\ge c_m, \quad   T(\sigma^n\eta^{(n)})=T((m-1)\underline{1})=c_{m-1},
$$
we have
$$
 T(\sigma^n\omega) - T(\sigma^n\eta^{(n)}) \ge p_{m-1}\ge p_{\min}.
$$
Thus in either case, we always have
$$
\frac{A_{n}(\omega)}{B_{n}(\omega)-\log \delta}\le Q(x, y_n) \le \frac{A_{n}(\omega)-\log p_{\min}}{B_{n}(\omega)},
$$
where $\delta$ is as in \eqref{eq:w-bounds}. Taking limits on both sides and using \eqref{eq:Blinear},  it concludes that 
$$\lim_{n\to\infty}\frac{A_n(\omega)}{B_n(\omega)}=\lim_{n\to\infty}Q(x, y_n) =  \alpha.$$

We next prove the necessity of the condition
$R_n(\omega)/n\to 0$ by contradiction. Suppose, as before, that
\eqref{eq:main-limit1} holds. If $R_n(\omega)/n\not\to 0$, then there
exist $\varepsilon>0$ and a strictly increasing subsequence
$\{n_j\}_{j\geq 1}$ such that, for every $j$,
$$L_{n_j}^-(\omega)\geq \varepsilon n_j \qquad\text{or}\qquad L_{n_j}^+(\omega)\geq \varepsilon n_j.$$
By passing to a further subsequence, we may assume that the same
alternative occurs for every $j$. Moreover, after moving each $n_j$
backward to the beginning of the corresponding endpoint run, we may
assume that $\omega_{n_j}\neq 1$ in the first case and
$\omega_{n_j}\neq m$ in the second.

First, if the subsequence $\{n_j\}$ satisfies $ R_{n_j}(\omega)=L_{n_j}^-(\omega)\ge\varepsilon n_j, \ \omega_{n_j}\ne1$. Write $l_j=R_{n_j}(\omega)$ and $q=\omega_{n_j}$. Define
$$
    \eta^{(j)} = \omega|_{(n_j-1)}(q-1)\underline{m},\quad 
    y_j:=\pi(\eta^{(j)}).
$$
Then
\begin{equation*} 
\begin{aligned}
  z(x, y_{j}) &= T(\sigma^{n_j - 1}\omega) - T(\sigma^{n_j - 1}\eta^{(j)})= p_qT(\sigma^{n_j}\omega)
  =p_qp_1^{l_j}T(\sigma^{n_j + l_j}\omega) \le p_{1}^{l_j}.
\end{aligned}
\end{equation*}
Combining the already established convergence $A_n(\omega)/B_n(\omega)\to\alpha$ with
\eqref{eq:Blinear}, we obtain
\begin{align*}
  \liminf_{j\to\infty}Q(x,y_j)
  &\ge \alpha+ \liminf_{j\to\infty}
       \frac{-\log z(x, y_j)}{B_{n_j-1}(\omega)}\\
  &\ge \alpha+ \liminf_{j\to\infty}
       \frac{l_j(-\log p_{1})}{(n_j-1)(-\log r_{\min})}\\
  &\ge \alpha+ \varepsilon\frac{\log p_{1}}{\log r_{\min}} > \alpha,
\end{align*}
which contradicts \eqref{eq:main-limit1}.

In contrast, if the subsequence $\{n_j\}$ satisfies $R_{n_j}(\omega)=L_{n_j}^+(\omega)\ge\varepsilon n_j$ and
$\omega_{n_j}\ne m$. Again write $l_j=R_{n_j}(\omega)$ and $q=\omega_{n_j}$. Define
$$
  \eta^{(j)}=\omega|_{(n_j-1)}(q+1)\underline{1},\quad
  y_j:=\pi(\eta^{(j)}).
$$
Similarly,
\begin{align*}
  z(x,y_j)
  &=T(\sigma^{n_j-1}\eta^{(j)})-T(\sigma^{n_j-1}\omega)=c_{q+1}-\left(c_q+p_qT(\sigma^{n_j}\omega)\right)\\
  &=p_q(1 - T(\sigma^{n_j}\omega))=p_qp_m^{l_j}\left(1-T(\sigma^{n_j+l_j}\omega)\right) \le p_m^{l_j}.
\end{align*}
Hence
\begin{align*}
  \liminf_{j\to\infty}Q(x,y_j)  \ge\alpha+\liminf_{j\to\infty}  \frac{-\log z(x,y_j)}{B_{n_j-1}(\omega)}  \ge \alpha+ \varepsilon\frac{\log p_m}{\log r_{\min}} > \alpha,
\end{align*}
again contradicting \eqref{eq:main-limit1}. Therefore, $R_n(\omega)/n\to0$. This completes the proof of both sufficiency and necessity in the non-endpoint case.

\emph{(ii) Endpoint case.}   We consider the case that $x$ is an  endpoint of a complementary interval of $K$, that is $x=\pi(\omega)\in K^1$. If $x$ is a right endpoint of a complementary interval, then there exists a finite word $v$ such that $\omega=v\underline{1}$. Let $N$ be the length of $v$. If $y=\pi(\eta)\in K\setminus\{x\}$ is sufficiently close to
$x$ and $n:=n(x,y)$ is the first position at which $\omega$ and $\eta$ differ. Then $n> N$ and  there exist
$q\in\{2,\ldots,m\}$ and $\gamma\in\Sigma$ such that
$$\omega=v1^{n-N-1}\underline{1},\qquad  \eta=v1^{n-N-1}q\gamma.$$
In this case $\sigma^{n-1}\omega=\underline{1}$ and
$\sigma^{n-1}\eta=q\gamma$, so
$$
  z(x,y)=T(q\gamma)-T(\underline{1})
  =T(q\gamma)\in [p_1,1].
$$
Combining with $\eqref{eq:Blinear}$, we have
$$\lim_{\substack{y\to x, y\in K}}\frac{ - \log z(x, y)}{B_{n-1}(\omega)} = 0. $$
As $y\to x$, we have $n\to\infty$. Since the coding $\omega$ is identically $1$ after the fixed prefix $v$,
$$\lim_{n\to\infty}\frac{A_{n-1}(\omega)}{B_{n-1}(\omega)}  =\frac{\log p_1}{\log r_1}.$$

It follows that
\begin{align*}
  \lim_{\substack{y\to x, y\in K}}Q(x,y)
  &=\lim_{\substack{y\to x, y\in K}}\left(\frac{A_{n-1}(\omega)}{B_{n-1}(\omega)} + \frac{ - \log z(x, y)}{B_{n-1}(\omega)}\right)\\
  &= \lim_{n\to\infty}\frac{A_{n-1}(\omega)}{B_{n-1}(\omega)}
  =\frac{\log p_1}{\log r_1}.
\end{align*}
Since $R_n(\omega)=0$ for $n>N$, both sides of the asserted equivalence
hold with a prescribed value $\alpha$ if and only if
$\alpha=\log p_1/\log r_1$. Hence the equivalence holds at every right
endpoint.

If $x$ is a left endpoint of a complementary interval, then its coding is eventually identically $m$. Replacing
$1,p_1,r_1$ above by $m,p_m,r_m$, respectively in the above argument, we conclude that  both sides
hold if and only if $\alpha=\log p_m/\log r_m$. This completes the proof of the theorem.
\end{proof}

\section{Typical exponents and dimension distortion}
\label{sec:dimension-proof}

Let $\nu_{\mathbf q}:=\mathbf q^{\N}$ denote the Bernoulli probability measure on the symbolic space $\Sigma$ with natural coordinate weights $\mathbf q=(r_1^s,\ldots,r_m^s)$.

\begin{proposition}\label{prop:typical-exponent}
For $\nu_{\mathbf q}$-almost every $\omega\in\Sigma$,
\begin{equation}\label{eq:typical-symbolic-limits}
  \lim_{n\to\infty}\frac{A_n(\omega)}{B_n(\omega)}
  =\frac{h(\mathbf q,\mathbf p)}{\chi(\mathbf q)},
  \qquad
  \lim_{n\to\infty}\frac{R_n(\omega)}{n}=0.
\end{equation}
Consequently, for $\nu_{\mathbf q}$-almost every coding $\omega$, if
$x=\pi(\omega)$, then
\begin{equation}\label{eq:typical-local-exponent}
  \lim_{\substack{y\to x, y\in K}}
  \frac{\log|F(y)-F(x)|}{\log|y-x|}
  =\frac{h(\mathbf q,\mathbf p)}{\chi(\mathbf q)}.
\end{equation}
\end{proposition}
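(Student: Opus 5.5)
The first limit in \eqref{eq:typical-symbolic-limits} follows from the strong law of large numbers. Under $\nu_{\mathbf q}$ the coordinates $\omega_k$ are i.i.d.\ with law $\mathbf q$. Hence the random variables $-\log p_{\omega_k}$ are i.i.d.\ and bounded with mean $h(\mathbf q,\mathbf p)$, and the variables $-\log r_{\omega_k}$ are i.i.d.\ and bounded with mean $\chi(\mathbf q)>0$. So $A_n(\omega)/n\to h(\mathbf q,\mathbf p)$ and $B_n(\omega)/n\to\chi(\mathbf q)$ almost surely. Dividing the two limits, which is legitimate since $\chi(\mathbf q)>0$, gives $A_n/B_n\to h(\mathbf q,\mathbf p)/\chi(\mathbf q)$ on a set of full $\nu_{\mathbf q}$-measure.

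For the endpoint runs, fix $\varepsilon>0$ and set $\theta:=\max\{q_1,q_m\}<1$; this is strictly less than $1$ because $m\ge2$ and all $q_i>0$. The event $\{R_n(\omega)\ge\varepsilon n\}$ requires that the block $\omega_{n+1}\cdots\omega_{n+\lceil\varepsilon n\rceil}$ be constant equal to $1$ or constant equal to $m$, or else that the tail $\sigma^n\omega$ be identically $1$ or $m$ (the convention giving $R_n=0$ there only helps). By independence its probability is at most $2\theta^{\lceil\varepsilon n\rceil}$. This bound is summable in $n$, so Borel--Cantelli gives $R_n(\omega)<\varepsilon n$ for all large $n$ almost surely. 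Intersecting over $\varepsilon=1/k$, $k\in\N$, yields $R_n(\omega)/n\to0$ almost surely.

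The consequence \eqref{eq:typical-local-exponent} is immediate. With $\alpha:=h(\mathbf q,\mathbf p)/\chi(\mathbf q)>0$, the two limits just proved are exactly conditions \eqref{eq:conditions1}, so Theorem~\ref{thm:local-scaling} applies at $x=\pi(\omega)$.

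There is no serious obstacle here; the only point needing care is the Borel--Cantelli estimate for $R_n$, in particular making the definition's edge cases (infinite runs) harmless. One can also note that the set of eventually constant $1$ or $m$ codings is countable and hence $\nu_{\mathbf q}$-null, since $\nu_{\mathbf q}$ has no atoms.
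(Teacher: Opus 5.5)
Your proposal is correct and follows the paper's proof essentially step for step: the strong law of large numbers for $A_n/n$ and $B_n/n$, then a geometric tail bound $q_1^{\lceil \varepsilon n\rceil}+q_m^{\lceil \varepsilon n\rceil}$ that is summed and fed into Borel--Cantelli over $\varepsilon=1/k$, then an appeal to Theorem~\ref{thm:local-scaling}. Working directly with the event $\{R_n\ge\varepsilon n\}$ is slightly cleaner bookkeeping than the paper's detour through $R_n\ge n/k+1$ and $\limsup R_n/n>2/k$, and applying the theorem without first excluding endpoint codings is legitimate because the theorem covers every $x\in K$.
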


\begin{proof}
For the Bernoulli probability measure $\nu_{\mathbf q}$ on $\Sigma$, by the strong law of large numbers,  there exists a set
$\Omega_0\subset\Sigma$ of full $\nu_{\mathbf q}$-measure such that, for every $\omega\in\Omega_0$, as $n\to \infty$, we have
\begin{align*} 
  \frac{A_n(\omega)}{n}
  \longrightarrow
  \sum_{i=1}^{m}q_i(-\log p_i)
  &=h(\mathbf q,\mathbf p), \\
  \frac{B_n(\omega)}{n}
  \longrightarrow
  \sum_{i=1}^{m}q_i(-\log r_i)
  &=\chi(\mathbf q).
\end{align*}
It follows that
$$
  \frac{A_n(\omega)}{B_n(\omega)}
  \longrightarrow
  \frac{h(\mathbf q,\mathbf p)}{\chi(\mathbf q)}.
$$

We next prove that $R_n(\omega)/n\to 0$ as $n\to\infty$ almost everywhere. Fix $k\in\N$. By the Bernoulli property,
\begin{align*}
  \nu_{\mathbf q}\{\omega:L_n^-(\omega)\ge \lceil n/k\rceil\}
  \le q_1^{\lceil n/k\rceil} \le q_1^{n/k},\\
  \nu_{\mathbf q}\{\omega:L_n^+(\omega)\ge \lceil n/k\rceil\}
  \le q_m^{\lceil n/k\rceil} \le q_m^{n/k},
\end{align*}
where $\lceil n/k\rceil = \min\{t\in\mathbb Z:n/k \le t \}$ denotes the ceiling of $n/k$. Hence
$$
  \nu_{\mathbf q}\left\{\omega:R_n(\omega)\ge \frac{n}{k} + 1\right\}
  \le \nu_{\mathbf q}\{\omega:R_n(\omega)\ge \lceil n/k\rceil\}
  \le q_1^{n/k}+q_m^{n/k}.
$$
Since $0<q_1,q_m<1$, summing over $n$ gives
$$
  \sum_{n=1}^{\infty} \nu_{\mathbf q}\left\{\omega:R_n(\omega)\ge \frac{n}{k} + 1\right\}
  \le \sum_{n=1}^{\infty} \left(q_1^{n/k}+q_m^{n/k}\right)<\infty.
$$
By the Borel--Cantelli lemma, for every fixed $k\in\mathbb N$,
$$
\nu_{\mathbf q}\left\{
\omega:
\frac{R_n(\omega)}{n}
\ge \frac1k+\frac1n
\ \text{i.o.}
\right\}=0.
$$

Set
$$
E_k
:=
\left\{
\omega:\limsup_{n\to\infty}\frac{R_n(\omega)}{n}>\frac2k
\right\}.
$$ 
If $\omega\in E_k$, then there are infinitely many $n$ such that  $\frac{R_n(\omega)}{n}>\frac2k$.
Hence, for infinitely many sufficiently large $n$, we have
$\frac{R_n(\omega)}{n}
>\frac2k
\ge\frac1k+\frac1n$.
It follows that
\begin{equation*} 
	E_k\subset \left\{
	\omega:\frac{R_n(\omega)}{n}\ge\frac1k+\frac1n\ \text{i.o.}
	\right\}
\end{equation*}
and
$$
\nu_{\mathbf q}(E_k)\le \nu_{\mathbf q}\left\{
\omega:\frac{R_n(\omega)}{n}\ge \frac1k+\frac1n\ \text{i.o.}
\right\}=0.
$$
Together with
$$
\bigcup_{k \ge 1} E_k=
\left\{
\omega:\limsup_{n\to\infty}\frac{R_n(\omega)}{n} > 0
\right\},
$$
this yields
$$
\nu_{\mathbf q}\left\{
\omega:\limsup_{n\to\infty}\frac{R_n(\omega)}{n} > 0
\right\} = \nu_{\mathbf q}\left(\bigcup_{k \ge 1} E_k\right) \le \sum_{k\ge 1}\nu_{\mathbf q}(E_k)=0.
$$
Thus $\limsup\limits_{n\to\infty}\frac{R_n(\omega)}{n} = 0$ for $\nu_{\mathbf q}$-almost every $\omega\in\Sigma$, proving \eqref{eq:typical-symbolic-limits}.

The codings that are eventually identically $1$ or eventually identically $m$ form a countable set, which is a $\nu_{\mathbf q}$-measure zero set.
For all remaining codings satisfying \eqref{eq:typical-symbolic-limits}, Theorem~\ref{thm:local-scaling} applies directly and yields \eqref{eq:typical-local-exponent}.
\end{proof}

\begin{lemma}[\cite{Hutchinson1981}]
\label{lem:natural-measure}
Let $\mu':=\pi_*\nu_{\mathbf q}$ be the self-similar measure associated with the IFS $\{S_i\}_{i=1}^m$ and  the natural probability vector $\mathbf{q}=(r_1^s,\ldots,r_m^s)$. Then  $ \mu' =\frac{\restr{\mathcal H^s}{K}}{\mathcal H^s(K)}$. In particular, if $E\subset\Sigma$ satisfies $\nu_{\mathbf q}(E)=1$, then
$\mathcal H^s(K\setminus\pi(E))=0$.
\end{lemma}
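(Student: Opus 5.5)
The plan is to prove that \(\mu'\) and the normalized measure \(\restr{\mathcal H^s}{K}/\mathcal H^s(K)\) coincide by showing that the latter satisfies the self-similarity identity
\[
\nu=\sum_{i=1}^m q_i\,\nu\circ S_i^{-1};
\]
the uniqueness part of Hutchinson's theorem then identifies it with \(\mu'\).

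\textbf{Step 1: positivity and finiteness of \(\mathcal H^s(K)\).} Finiteness follows from covering \(K\) by the level-\(n\) cylinders \(S_u([0,1])\), \(|u|=n\). This gives
\[
\sum_{|u|=n} r_u^s=\Big(\sum_i r_i^s\Big)^n=1,
\]
so \(\mathcal H^s(K)\le 1\). Positivity is the main obstacle. Here I would use the mass distribution principle applied to \(\mu'\). Under the strong separation condition, take an interval \(I\) of length \(\rho<\delta\) meeting \(K\), and choose for each point the shortest cylinder of diameter comparable to \(\rho\). The gaps of size \(\ge \delta r_u\) between sibling cylinders ensure that \(I\) meets only boundedly many cylinders \(S_u(K)\) with \(r_u\approx\rho\), say with \(r_{\min}\rho<r_u\le\rho\). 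Each such cylinder has \(\mu'\)-mass \(q_u=r_u^s\le\rho^s\). Hence \(\mu'(I)\le C\rho^s\), and \(\mathcal H^s(K)\ge C^{-1}>0\).

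\textbf{Step 2: self-similarity of \(\nu\).} Since the \(S_i\) are similarities, \(\mathcal H^s(S_i(E))=r_i^s\mathcal H^s(E)\). The pieces \(S_i(K)\) are disjoint, so for a Borel set \(E\),
\[
\mathcal H^s(E\cap K)=\sum_i \mathcal H^s(E\cap S_i(K))=\sum_i r_i^s\,\mathcal H^s\big(S_i^{-1}(E)\cap K\big).
\]
After normalizing, this is exactly \(\nu=\sum_i q_i\,\nu\circ S_i^{-1}\). By uniqueness of the invariant probability measure, \(\nu=\mu'\).

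\textbf{Step 3: the final assertion.} If \(\nu_{\mathbf q}(E)=1\), then \(\pi(E)\) is analytic, hence \(\mu'\)-measurable, and
\[
\mu'(\pi(E))\ge\nu_{\mathbf q}(\pi^{-1}\pi(E))\ge \nu_{\mathbf q}(E)=1.
\]
Therefore \(\mathcal H^s(K\setminus\pi(E))=\mathcal H^s(K)\,\mu'(K\setminus\pi(E))=0\). One may also replace \(E\) by a Borel subset of full measure; since \(\pi\) is injective and continuous, the image of that subset is Borel. The only nontrivial input is the lower bound in Step 1, which is where strong separation is used.
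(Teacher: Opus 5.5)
The paper gives no proof of this lemma; it simply quotes it from Hutchinson's work. Your argument is correct and is essentially the standard proof behind that citation: the mass distribution principle applied to $\mu'$ gives $0<\mathcal H^s(K)\le 1$, and $\restr{\mathcal H^s}{K}/\mathcal H^s(K)$ satisfies the same invariance equation as $\mu'$, so uniqueness identifies the two measures. Strong separation makes the counting step elementary and removes the need to show $\mathcal H^s(S_i(K)\cap S_j(K))=0$, which one must do under the open set condition.
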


\begin{lemma}[\cite{DMRV2006}]\label{lem:constant-exponent-distortion}
Let $(X,\rho)$ and $(Y,d)$ be metric spaces, and let $f:X\to Y$ be a homeomorphism.
Suppose that there exists a constant $\alpha\in(0,\infty)$ such that, for every non-isolated point $x$ of $X$,
\begin{equation*} 
  \lim_{\substack{y\to x}}
  \frac{\log d(f(x),f(y))}
       {\log\rho(x,y)}
  =\alpha.
\end{equation*}
Then, for every $A\subset X$,
\begin{equation*} 
  \dim_{\mathrm H}f(A)
  =\frac{1}{\alpha}\dim_{\mathrm H}A.
\end{equation*}
\end{lemma}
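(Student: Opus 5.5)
The plan is to prove the two inequalities separately, first showing
\[
\dim_{\mathrm H} f(A)\le \frac{1}{\alpha}\dim_{\mathrm H} A ,
\]
and then getting the reverse inequality by symmetry, applying the same argument to $f^{-1}$. Since $f$ is a homeomorphism, $y\to x$ if and only if $f(y)\to f(x)$. Non-isolated points of $X$ correspond exactly to non-isolated points of $Y$. Taking reciprocals in the hypothesis shows that $f^{-1}$ has constant logarithmic exponent $1/\alpha$ at every non-isolated point of $Y$. Applying the upper bound to $f^{-1}$ and the set $f(A)$ then gives $\dim_{\mathrm H}A\le \alpha\dim_{\mathrm H}f(A)$, which is the reverse inequality.

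\textbf{Isolated points.} These are not covered by the hypothesis, so I would split them off. I would assume $X$ is separable, which holds in the intended application $X=M\subset\R$. Then the set of isolated points of $X$ is countable, so it has Hausdorff dimension $0$, and so does its image under $f$. By countable stability of $\dim_{\mathrm H}$, it suffices to treat $A$ consisting of non-isolated points. Without separability this step genuinely needs care: an uncountable uniformly discrete set has infinite dimension. So I would state the separability assumption explicitly.

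\textbf{Upper bound.} Fix $0<\varepsilon<\alpha$. For each $x\in A$ the limit hypothesis gives $\delta_x\in(0,1)$ such that
\[
d(f(x),f(y))\le \rho(x,y)^{\alpha-\varepsilon}\qquad\text{whenever } 0<\rho(x,y)<\delta_x .
\]
The main obstacle is that $\delta_x$ is not uniform in $x$. I would handle this by setting $A_k:=\{x\in A:\delta_x>1/k\}$, so that $A=\bigcup_k A_k$. By countable stability it is enough to bound $\dim_{\mathrm H} f(A_k)$ for each $k$.

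Now fix $t>\dim_{\mathrm H}A\ge \dim_{\mathrm H}A_k$ and $\eta>0$. Choose a cover $\{U_i\}$ of $A_k$ by sets with $\diam U_i<1/k$, each meeting $A_k$, and with $\sum_i(\diam U_i)^t<\eta$. In each $U_i$ pick a point $x_i\in U_i\cap A_k$. For every $y\in U_i\cap A_k$ we have $\rho(x_i,y)<1/k<\delta_{x_i}$, hence $d(f(x_i),f(y))\le(\diam U_i)^{\alpha-\varepsilon}$. It follows that
\[
\diam f(U_i\cap A_k)\le 2(\diam U_i)^{\alpha-\varepsilon}.
\]
These image sets cover $f(A_k)$, and their diameters tend to $0$ as the mesh of the cover does. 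This gives
\[
\mathcal H^{t/(\alpha-\varepsilon)}\bigl(f(A_k)\bigr)\le 2^{t/(\alpha-\varepsilon)}\eta ,
\]
so the measure is $0$ and $\dim_{\mathrm H}f(A_k)\le t/(\alpha-\varepsilon)$. Letting $t\downarrow\dim_{\mathrm H}A$ and $\varepsilon\downarrow0$, then taking the union over $k$, yields the upper bound. Together with the symmetric argument for $f^{-1}$, this proves the lemma.
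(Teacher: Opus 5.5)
Your proof is correct. The paper gives no proof of this lemma; it only cites \cite{DMRV2006}. Your argument is the standard one for this fact: local H\"older bounds made uniform on the countably many pieces $A_k$, countable stability of $\dim_{\mathrm H}$, and the reversed inequality obtained by applying the same bound to $f^{-1}$ with exponent $1/\alpha$.

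The one thing to tidy is the separability hypothesis you add, which the statement does not need.
\begin{itemize}
\item If $A$ is non-separable, then for some $\epsilon>0$ it contains an uncountable $\epsilon$-separated subset. So $A$ admits no countable cover by sets of diameter $<\epsilon$, and $\dim_{\mathrm H}A=\infty$. Since $f(A)$ is homeomorphic to $A$, it is also non-separable, so $\dim_{\mathrm H}f(A)=\infty$ and the formula holds trivially.
\item If $A$ is separable, the points of $A$ that are isolated in $X$ are isolated in $A$, hence countable. So your reduction to non-isolated points goes through with no assumption on $X$.
\end{itemize}
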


\begin{proof}[Proof of Theorem~\ref{thm:dimension-distortion}]
Let $\Omega_*$ be the set of codings that satisfy \eqref{eq:typical-symbolic-limits} and are not eventually identically
$1$ or $m$, and set $M:=\pi(\Omega_*)$.
By Proposition~\ref{prop:typical-exponent}, $\nu_{\mathbf q}(\Omega_*)=1$. The codings that are eventually identically $1$ or $m$ correspond precisely to the endpoints of complementary intervals of $K$, and therefore $M\subset K^*$, and $\mathcal H^s(K\setminus M)=0$ by Lemma~\ref{lem:natural-measure}.

We verify that the restriction
$\restr{F}{M}:M\to F(M)$ is a homeomorphism. Since the distribution
function $F$ is continuous and non-decreasing, its restriction to $M$ is
continuous. To prove injectivity, suppose that $x,y\in M$ satisfy $x<y$
and $F(x)=F(y)$. Then
\[
\mu((x,y])=F(y)-F(x)=0.
\]
Since $\operatorname{supp}\mu=K$, it follows that
$(x,y)\cap K=\varnothing$. Hence, $x$ and $y$ are the two endpoints of
the same complementary interval of $K$, contradicting the assumption that
$M\subset K^*$. Thus, $\restr{F}{M}$ is strictly increasing and
therefore is a continuous bijection from $M$ onto $F(M)$.

We now show that its inverse is continuous. Suppose that $x_n,x\in M$ and
$F(x_n)\to F(x)$, but $x_n\not\to x$. Then there exist $\varepsilon>0$
and a subsequence $\{x_{n_j}\}$ such that
\[
|x_{n_j}-x|\geq\varepsilon
\qquad\text{for all }j.
\]
Since $K\subset[0,1]$ is compact, by passing to a further subsequence, we
may assume that $x_{n_j}\to x'\in K$. Necessarily, $x'\neq x$. By the
continuity of $F$,
\[
F(x')=\lim_{j\to\infty}F(x_{n_j})=F(x).
\]
As above, $x$ and $x'$ must be the two endpoints of the same complementary
interval of $K$, contradicting $x\in M\subset K^*$. Therefore,
$(\restr{F}{M})^{-1}$ is continuous, and hence
$\restr{F}{M}:M\to F(M)$ is a homeomorphism.

By Proposition~\ref{prop:typical-exponent}, for every $x\in M$, 
$$
  \lim_{\substack{y\to x, y\in K}}
  \frac{\log|F(y)-F(x)|}{\log|y-x|}=\frac{h(\mathbf q,\mathbf p)}{\chi(\mathbf q)}.
$$
Restricting the approaching points to $M\subset K$ leaves the same limit unchanged. Hence Lemma~\ref{lem:constant-exponent-distortion} applies to
$f=\restr{F}{M}$ and proves the theorem.
\end{proof}

\section{A non-uniform three-branch example}\label{sec:example}

We illustrate the preceding results by considering a three-branch
self-similar system with non-uniform contraction ratios. Let
\[
S_1(x)=\frac{x}{9},
\qquad
S_2(x)=\frac{x}{9}+\frac{1}{3},
\qquad
S_3(x)=\frac{x}{3}+\frac{2}{3},
\]
and let $K$ denote the attractor of this IFS. Its similarity dimension $s={\log 2}/{\log 3}$. The associated natural probability vector is
\[
\mathbf q=(r_1^s,r_2^s,r_3^s)
=({1}/{4},{1}/{4},{1}/{2}).
\]

For each strictly positive probability vector
$\mathbf p=(p_1,p_2,p_3)$, define
\[
\alpha_{\mathbf p}
:=
\frac{h(\mathbf q,\mathbf p)}{\chi(\mathbf q)}
=
\frac{-\sum_{i=1}^3 q_i\log p_i}
{-\sum_{i=1}^3 q_i\log r_i}.
\]
Let $F_{\mathbf p}$ denote the generalized Cantor function associated
with the self-similar measure determined by $\mathbf p$. The quantity
$\alpha_{\mathbf p}$ describes the typical local scaling behavior of
$F_{\mathbf p}$ with respect to the natural geometry of $K$.

\begin{figure}[htbp]
	\centering
	\includegraphics[width=0.48\textwidth]
	{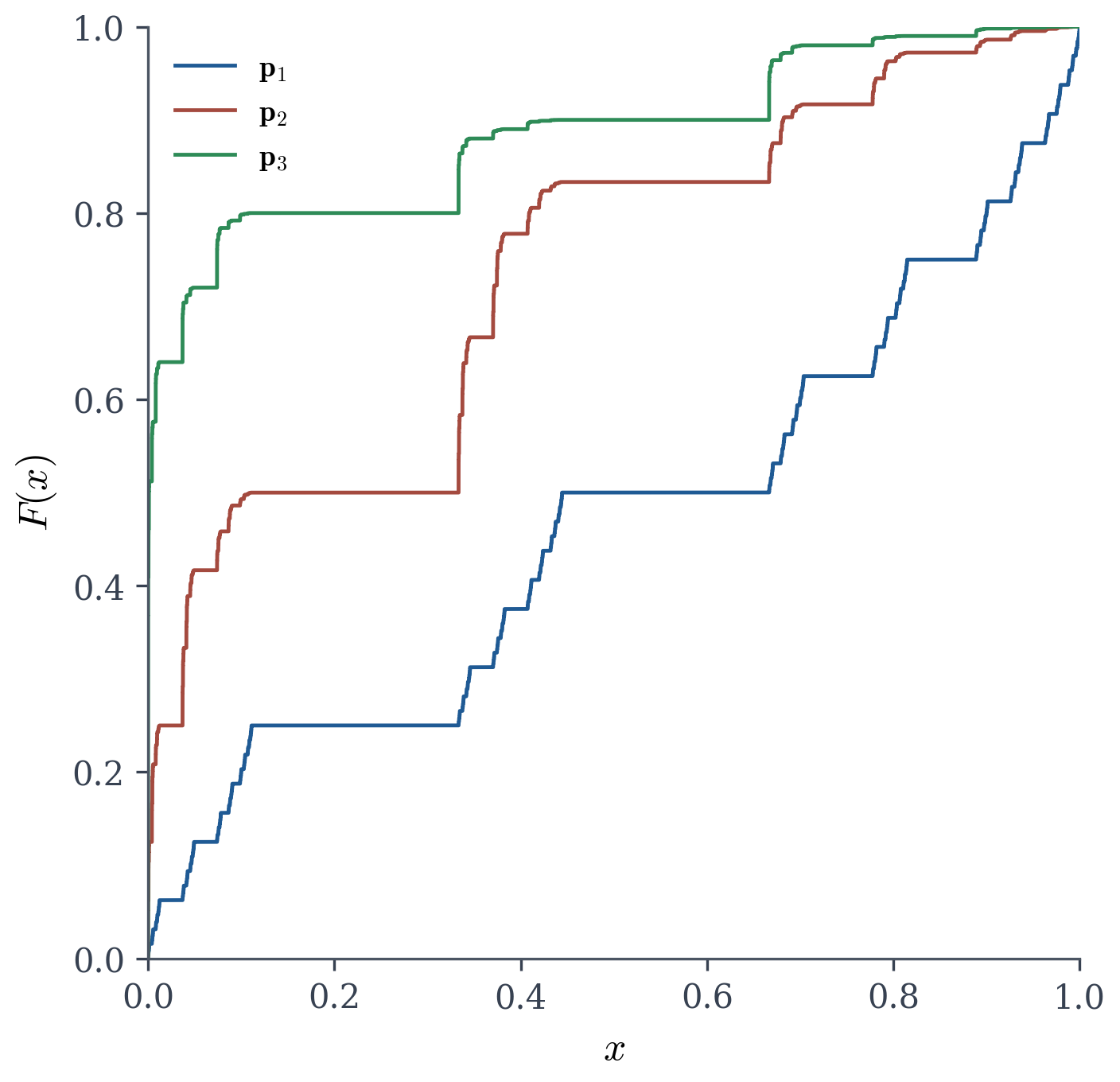}
	\caption{Generalized Cantor functions associated with the fixed IFS
		and three different probability vectors. Larger values of
		$\alpha_{\mathbf p}$ indicate flatter behavior at
		$\mathcal H^s$-almost every point of $K$.}
	\label{fig:cantor-alpha-three-curves}
\end{figure}

Consider the three probability vectors
\[
\mathbf p_1
=
\mathbf q
=(1/4,1/4,1/2),
\qquad
\mathbf p_2
=
(1/2,1/3,1/6),
\qquad
\mathbf p_3
=
(4/5,1/{10}, 1/{10}).
\] 
The corresponding generalized Cantor functions are shown in
Figure~\ref{fig:cantor-alpha-three-curves}. Since the underlying IFS is
fixed, their plateaus occur on the same complementary intervals of
$K$.  A direct calculation gives
$$\alpha_{\mathbf p_1}=0.6309\ldots, \quad	\alpha_{\mathbf p_2}=0.8155\ldots, \quad\text{and} \quad \alpha_{\mathbf p_3}=1.0818\ldots.$$

To interpret the exponents above, recall that a point
$x=\pi(\omega)\in K$ is called $\mathbf q$-typical if
\[
\lim_{n\to\infty}
\frac{\#\{1\leq k\leq n:\omega_k=i\}}{n}
=
q_i,
\qquad i=1,2,3.
\]
Thus, at a $\mathbf q$-typical point, the symbols $1$, $2$, and $3$ occur with asymptotic frequencies $1/4$, $1/4$, and $1/2$,
respectively. By the strong law of large numbers, the set of such points has full measure with respect to the natural self-similar
measure $\mu_{\mathbf q} (= \pi_* \nu_{\mathbf q})$. Since $q_i=r_i^s$, this set also has full $\mathcal H^s$-measure in $K$. More precisely, after excluding the null set on which the sublinear endpoint-run condition may fail, the local scaling exponent of
$F_{\mathbf p}$ at $\mathbf q$-typical points is $\alpha_{\mathbf p}$. Hence, on this full-measure set, a larger value of
$\alpha_{\mathbf p}$ corresponds to flatter local behavior of
$F_{\mathbf p}$.

For the three probability vectors above, the increase in
$\alpha_{\mathbf p}$ is accompanied by an increase in the mass assigned
to the leftmost first-level cylinder, from $1/4$ to $1/2$ and then to
$4/5$. Consequently, the initial rise of the corresponding distribution
function becomes progressively more pronounced. This visual comparison
does not, however, imply a pointwise ordering of the local flatness of
the three functions throughout $K$.

We conclude this section by showing that convergence of the bulk quotient
\(A_n(\omega)/B_n(\omega)\) alone does not guarantee the existence of the
local scaling exponent, thereby demonstrating the necessity of the sublinear
endpoint-run condition in  Theorem~\ref{thm:local-scaling}. 

Fix
\[
\mathbf p=\mathbf p_2=(1/2,1/3,1/6),
\qquad
\alpha_0=\frac{-\log p_1}{-\log r_1}
=\frac{\log 2}{2\log 3}.
\]
Consider the two codings $\omega^{(1)}$ and $\omega^{(2)}$ of the form:
\[
\omega_n^{(1)}=
\begin{cases}
2,&n=k^2\text{ for some }k\in\mathbb N,\\
1,&\text{otherwise},
\end{cases}
\qquad
\omega^{(2)}=2\,1^2\,2\,1^4\,2\,1^8\,2\,1^{16}\cdots .
\]
In both codings, the frequency of the symbol \(2\) tends to zero, and
hence that of the symbol \(1\) tends to one. Consequently,
\[
\frac{A_n(\omega^{(i)})}{n}\longrightarrow-\log p_1=\log 2,
\qquad i=1,2.
\]
Moreover, since both codings contain only the symbols \(1\) and \(2\)
and \(r_1=r_2=1/9\), we have
\[
B_n(\omega^{(i)})=2n\log3.
\]
It follows that
\begin{equation}\label{eq:example-common-bulk}
	\frac{A_n(\omega^{(i)})}{B_n(\omega^{(i)})}
	\longrightarrow
	\frac{\log2}{2\log3}
	=\alpha_0,
	\qquad i=1,2.
\end{equation}
Their endpoint runs, however, have different growth rates. For
$\omega^{(1)}$, consecutive occurrences of $2$ are separated by blocks
of $1$'s of length $2k$, and hence
$R_n(\omega^{(1)})=O(\sqrt n)$. Theorem~\ref{thm:local-scaling} therefore
implies that the local scaling exponent exists at
$x_1=\pi(\omega^{(1)})$ and equals $\alpha_0$.

For $\omega^{(2)}$, let $n_k=2^k+k-2$, the position of the $k$th
occurrence of $2$. This occurrence is followed by $2^k$ consecutive
$1$'s, so
\[
\frac{R_{n_k}(\omega^{(2)})}{n_k}
=\frac{2^k}{2^k+k-2}\longrightarrow1.
\]
We show directly that the local scaling exponent fails to exist at
$x_2=\pi(\omega^{(2)})$. Using the notation $Q$, $w$, and $z$ from the
proof of Theorem~\ref{thm:local-scaling}, if the first discrepancy occurs
at position $n$, then
\begin{equation}\label{eq:example-quotient-decomposition}
Q(x_2,y)=
\frac{B_{n-1}(\omega^{(2)})}
{B_{n-1}(\omega^{(2)})-\log w(x_2,y)}
\left(
\frac{A_{n-1}(\omega^{(2)})}{B_{n-1}(\omega^{(2)})}
+\frac{-\log z(x_2,y)}{B_{n-1}(\omega^{(2)})}
\right).
\end{equation}
The second term in parentheses records the correction produced by an
endpoint run.

Set $u_k=\omega^{(2)}|_{n_k-1}$ and define
\[
\eta^{(k)}=u_k\,1\,\underline{3},
\qquad y_k=\pi(\eta^{(k)}).
\]
The first discrepancy occurs at $n_k$, where $\omega^{(2)}$ has the
symbol $2$, followed by $2^k$ symbols $1$. The symbolic recursion for
$T$ gives
\[
z(x_2,y_k)=p_2T(\sigma^{n_k}\omega^{(2)})
\asymp p_2p_1^{2^k}\asymp2^{-2^k}.
\]
Moreover,
\[
B_{n_k-1}(\omega^{(2)})=2(n_k-1)\log3
\sim2^{k+1}\log3.
\]
Since $w(x_2,y_k)$ is bounded away from zero, the prefactor in
\eqref{eq:example-quotient-decomposition} tends to $1$. Together with
\eqref{eq:example-common-bulk}, this yields
\[
Q(x_2,y_k)\longrightarrow2\alpha_0.
\]

For a second sequence, choose the discrepancy near the middle of the
same block. Let
\[
m_k=n_k+2^{k-1},\qquad
\widetilde\eta^{(k)}
=\omega^{(2)}|_{m_k-1}\,2\,\underline{1},
\qquad
\widetilde y_k=\pi(\widetilde\eta^{(k)}).
\]
At least $2^{k-1}$ consecutive $1$'s remain in the coding of $x_2$ from
this position. Hence
\[
T(\sigma^{m_k-1}\omega^{(2)})\longrightarrow0,
\qquad T(2\,\underline{1})=p_1>0,
\]
and consequently
\[
z(x_2,\widetilde y_k)\longrightarrow p_1,
\qquad
\frac{-\log z(x_2,\widetilde y_k)}
{B_{m_k-1}(\omega^{(2)})}\longrightarrow0.
\]
Again the prefactor in \eqref{eq:example-quotient-decomposition} tends
to $1$, and therefore
\[
Q(x_2,\widetilde y_k)\longrightarrow\alpha_0.
\]

Both $y_k$ and $\widetilde y_k$ tend to $x_2$, but the corresponding
quotients have distinct limits. Thus the local scaling exponent at
$x_2$ does not exist, although the bulk quotient converges. This proves
that the endpoint-run condition $R_n(\omega)/n\to0$ cannot in general be
omitted from Theorem~\ref{thm:local-scaling}.

\end{document}